\spnewtheorem*{xproof}{}{\itshape}{\rmfamily}
\renewenvironment{proof}[1][\proofname]
 {\xproof}
 {\endxproof}
\journalname{Graphs and Combinatorics}
\begin{document}
\title{The Hamilton-Waterloo problem for triangle-factors and heptagon-factors}
\author{Hongchuan Lei\and {Hung-Lin Fu}}
\institute{Hongchuan Lei\at Department of Applied Mathematics, National Chiao Tung University, Hsinchu 30010, Taiwan; \at Institute of Mathematics, Academia Sinica, Taipei 10617, Taiwan  \\
\email{hongchuanlei@gmail.com}
\and Hung-Lin Fu\at Department of Applied Mathematics, National Chiao Tung University, Hsinchu 30010, Taiwan
 \\\email{hlfu@math.nctu.edu.tw}}
\date{Received: date / Accepted: date}
\maketitle
\begin{abstract}
Given 2-factors $R$ and $S$ of order $n$, let $r$ and $s$ be nonnegative integers with $r+s=\lfloor \frac{n-1}{2}\rfloor$, the Hamilton-Waterloo problem asks for a 2-factorization of $K_n$ if $n$ is odd, or of $K_n-I$ if $n$ is even, in which $r$ of its 2-factors are isomorphic to $R$ and the other $s$ 2-factors are isomorphic to $S$. In this paper, we solve the problem for the case of triangle-factors and heptagon-factors for odd $n$ with 3 possible exceptions when $n=21$.
\keywords{Cycle decomposition\and Triangle-factor\and Heptagon-factor\and 2-factorization}
\end{abstract}
\section{Introduction}
A decomposition of a graph $G$ is a collection of edge-disjoint subgraphs such that every edge of $G$ belongs to exactly one of the subgraphs. A subgraph $F$ of a graph $G$ is a factor if $F$ contains all the vertices of $G$, if each component of $F$ is isomorphic to a graph $H$, then $F$ is called an $H$-factor of $G$, while if $F$ is a $d$-regular graph, then we call $F$ a $d$-factor.  A $C_k$-factor is a 2-factor consisting entirely of cycles of length $k$. A factorization of a graph $G$ is a decomposition of $G$ such that each subgraph is a factor, if the factors are all 2-factors then it is called a 2-factorization. An $\{H_1^{m_1},H_2^{m_2},\ldots,H_t^{m_t}\}$-factorization of a graph $G$ is a factorization of $G$ in which there are precisely $m_i$ $H_i$-factors. If such a factorization exists, we say that $(G;H_1^{m_1},H_2^{m_2},\ldots,H_t^{m_t})$ exists.

Given 2-factors $R$ and $S$ of oder $n$, let $r$ and $s$ be nonnegative integers with $r+s=\lfloor \frac{n-1}{2}\rfloor$, the Hamilton-Waterloo problem asks for a 2-factorization of the complete graph $K_n$ if $n$ is odd, or $K_n-I$ if $n$ is even, in which $r$ of its 2-factors are isomorphic to $R$ and the other $s$ 2-factors are isomorphic to $S$, where $I$ is a 1-factor. The goal of the problem is to determine the spectrum of $r$ (or $s$) for all possible $n$, i.e. the set of $r$ (or $s$) such that the corresponding 2-factorization of $K_n$ or $k_n-I$ exists. If $R$ is a $C_m$-factor and $S$ is a $C_k$-factor, i.e. each 2-factor is uniform, then such a 2-factorization is denoted by $HW(n;r,s;m,k)$.

The uniform cases of the Hamilton-Waterloo problem have attracted much attention in the last decade. The existence of $HW(n;r,s;m,k)$ has been settled when $r=0$ or $s=0$ in \cite{Alspach1,Alspach2,Hoffman}. So we only discuss the case $rs\neq 0$ in this paper.

\begin{theorem}\cite{Alspach1,Alspach2,Hoffman}\label{1.0}
Let $n\geq 3$ and $m\geq 3$. Let $G=K_n$ if $n$ is odd, $G=K_n-I$ if $n$ is even. Then $(G;C_m^{\lfloor \frac{n-1}{2}\rfloor})$ exists if and only if $n\equiv 0\pmod{m}$ and $(n,m)\notin \{(6,3),(12,3)\}$.
\end{theorem}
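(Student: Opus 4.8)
The plan is to verify the two necessary conditions first and then establish sufficiency by separating the case $m=3$ from $m\ge 4$. For necessity, since each factor must partition the $n$ vertices into vertex-disjoint $m$-cycles, we need $m\mid n$. The number of factors is forced by regularity: $K_n$ is $(n-1)$-regular when $n$ is odd and $K_n-I$ is $(n-2)$-regular when $n$ is even, so in both cases exactly $\lfloor\frac{n-1}{2}\rfloor$ edge-disjoint $2$-factors are produced. It remains to rule out $(n,m)\in\{(6,3),(12,3)\}$. For $(6,3)$ one observes that $K_6-I\cong K_{2,2,2}$, whose only triangle-factors are the four transversal pairs; a direct check shows that no two of them are edge-disjoint, so the two required triangle-factors cannot coexist. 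The order-$12$ case is the non-existence of a nearly Kirkman triple system of order $12$, which I would settle by a finite, computer-assisted case analysis.

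For sufficiency with $m=3$, the problem is exactly a resolvable triangle decomposition. When $n$ is odd we have $n\equiv 3\pmod 6$ and the desired object is a Kirkman triple system, whose existence is classical; when $n$ is even we have $n\equiv 0\pmod 6$ and we need a resolvable triangle decomposition of $K_n-I$, i.e. a nearly Kirkman triple system, which exists for every such $n$ apart from the two excluded orders.

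For $m\ge 4$ I would build the factorizations by combining direct and recursive methods; by the theorem statement, no sporadic exceptions remain in this range. The direct step uses the method of differences over $\mathbb{Z}_n$ (for odd $n$), or over $\mathbb{Z}_{n-1}$ together with one fixed point (for even $n$), to produce base factorizations admitting a near-regular cyclic automorphism; these cover the prime-power and other small admissible orders. The recursive step introduces $C_m$-frames --- partial $C_m$-factorizations of complete multipartite graphs in which each partial factor misses exactly one part --- as ingredients, and propagates existence through Wilson-type weighting of group divisible designs, filling in each group (with one extra point in the even case) by a small factorization to turn a frame into a genuine $C_m$-factorization of $K_n$ or $K_n-I$. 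Doubling and tripling constructions then reach all remaining orders $n=mt$.

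The main obstacle is not the recursive machinery but guaranteeing that it never stalls: one must supply a complete library of base factorizations and frames of every needed type, paying particular attention to the orders just beyond the excluded $(6,3)$ and $(12,3)$ and to the smallest values of $t$ for each $m$, where the difference constructions are most delicate and where any additional exception would have to appear. Verifying that these base cases all exist --- by carefully chosen explicit constructions and, where necessary, exhaustive search --- is where the bulk of the technical effort lies.
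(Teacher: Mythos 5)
The first thing to say is that the paper contains no proof of this statement: it is Theorem~\ref{1.0}, imported as a black box from \cite{Alspach1,Alspach2,Hoffman} (Kirkman-type systems and the uniform Oberwolfach problem for odd cycles settle the odd-$n$ cases, and Hoffman--Schellenberg settles the $C_k$-factorizations of $K_n-I$). So the comparison is really with those cited proofs. Your necessity argument is correct and standard: $m\mid n$ from the vertex partition into $m$-cycles, the count $\lfloor\frac{n-1}{2}\rfloor$ from regularity, and your hand check for $(6,3)$ is sound --- $K_6-I\cong K_{2,2,2}$ has exactly four triangle-factors (the transversal pairs), and any two of them share an edge, so no two edge-disjoint ones exist. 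For $m=3$ you also identify exactly the right objects: Kirkman triple systems for $n\equiv 3\pmod 6$ and nearly Kirkman triple systems for $n\equiv 0\pmod 6$, $n\geq 18$.

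The genuine gap is everything after that. For $m\geq 4$ your argument is a research programme rather than a proof: the sentence ``by the theorem statement, no sporadic exceptions remain in this range'' is circular, since the absence of exceptions is precisely the assertion to be proved and is the hard content of \cite{Alspach2} and \cite{Hoffman}. The frame/GDD-weighting machinery you describe is the right genre of construction, but you never establish that the required frames, base factorizations, and small-order ingredients actually exist --- you explicitly defer this (``where the bulk of the technical effort lies''), which is to concede the theorem rather than prove it. Similarly, the exclusion of $(12,3)$, i.e.\ the non-existence of a nearly Kirkman triple system of order $12$, is left to an unexecuted computer search; this is a known but nontrivial result that a self-contained proof must supply. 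Within the present paper none of this matters, since the theorem is only invoked as a citation (chiefly for $(K_{6t+3};C_3^{3t+1})$ in Lemma~\ref{4.1} and for the $\gamma=10$ and $s=0$ cases), and citing the literature is the correct move; but judged as a standalone proof, your proposal establishes necessity and the $m=3$ reduction only, and leaves sufficiency for $m\geq 4$ and the order-$12$ exception unproved.
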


Adams et al. \cite{Adams} dealt with the cases $(m,k)\in\{(4,6),$ $(4,8),$ $(4,16),$ $(8,16),$ $(3,5),$ $(3,15),$ $(5,15)\}$ and completely solved some of them, they also introduced some methods. Danziger et al.\cite{Danziger} almost completely solved the case $(m,k)=(3,4)$ with only 9 possible exceptions. The case $(m,k)=(n,3)$, i.e. $R$ is a Hamilton cycle and $S$ is a triangle-factor, was studied in \cite{Dinitz1,Dinitz2,Horak,Lei2}, and is still open. In recent years, remarkable progress has been made on the Hamilton-Waterloo problem when both $R$ and $S$ are consist of even cycles, see \cite{Bryant1,Bryant2,Fu,Lei1}.

The next two lemmas are useful for our constructions, and have been used in many papers, for example see \cite{Fu}.
\begin{lemma}\label{1.2}
Suppose $G_1$ and $G_2$ are two vertex-disjoint graphs. If $(G_1;C_m^r,C_k^s)$ and $(G_2;C_m^r,C_k^s)$ both exist, then  $(G_1\cup G_2;C_m^r,C_k^s)$ exists.
\end{lemma}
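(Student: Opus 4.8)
The plan is to take the two factorizations supplied by the hypothesis and glue them together factor by factor. Since $(G_1;C_m^r,C_k^s)$ exists, I would fix a 2-factorization of $G_1$ consisting of $C_m$-factors $A_1,\ldots,A_r$ together with $C_k$-factors $B_1,\ldots,B_s$, whose edge sets partition $E(G_1)$. Likewise, from $(G_2;C_m^r,C_k^s)$ I would fix $C_m$-factors $A_1',\ldots,A_r'$ and $C_k$-factors $B_1',\ldots,B_s'$ partitioning $E(G_2)$. Because both factorizations use exactly the same multiplicities $r$ and $s$, the factors can be matched index by index, which is the feature that makes the gluing possible.

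Next I would form the candidate factors $A_i\cup A_i'$ for $1\le i\le r$ and $B_j\cup B_j'$ for $1\le j\le s$ inside the union graph $G_1\cup G_2$. The key observation is that since $A_i$ covers every vertex of $G_1$ by $m$-cycles and $A_i'$ covers every vertex of $G_2$ by $m$-cycles, and since $G_1$ and $G_2$ are vertex-disjoint, the union $A_i\cup A_i'$ covers every vertex of $G_1\cup G_2$ by disjoint $m$-cycles; hence it is a genuine $C_m$-factor of $G_1\cup G_2$. The same reasoning shows each $B_j\cup B_j'$ is a $C_k$-factor of $G_1\cup G_2$.

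Finally I would verify that these $r+s$ factors constitute a factorization of $G_1\cup G_2$. Edge-disjointness within $G_1$ and within $G_2$ is inherited directly from the two given factorizations, while no edge of $G_1$ can coincide with an edge of $G_2$ because the two graphs share no vertex; together these facts show the new factors are pairwise edge-disjoint and that their edges exhaust $E(G_1)\cup E(G_2)=E(G_1\cup G_2)$. Counting then yields exactly $r$ $C_m$-factors and $s$ $C_k$-factors, so $(G_1\cup G_2;C_m^r,C_k^s)$ exists. There is no substantial obstacle in this argument: the only point deserving care is the verification that the union of two factors, one on each part, is still a factor in the required sense of covering all vertices, and this is immediate from the vertex-disjointness of $G_1$ and $G_2$.
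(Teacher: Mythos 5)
Your proof is correct and is precisely the standard argument the paper has in mind: the paper states this lemma without proof (citing only that it has been used in many papers), and your index-by-index matching of the two factorizations, using vertex-disjointness to show each union $A_i\cup A_i'$ and $B_j\cup B_j'$ is a genuine $C_m$- or $C_k$-factor of $G_1\cup G_2$, fills in exactly the routine verification being omitted. Nothing is missing and no different route is taken.
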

\begin{lemma}\label{1.3}
Suppose $G_1$ and $G_2$ are two edge-disjoint graphs with the same vertex set. If $(G_1;C_m^{r_1},C_k^{s_1})$ and $(G_2;C_m^{r_2},C_k^{s_2})$ both exist, then $(G_1\cup G_2;C_m^{r_1+r_2},\\C_k^{s_1+s_2})$ exists.
\end{lemma}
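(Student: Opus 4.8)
The plan is to unwind the definition of a factorization and simply concatenate the two given ones. By hypothesis $(G_1;C_m^{r_1},C_k^{s_1})$ exists, so there is a collection $\mathcal{F}_1$ consisting of $r_1$ $C_m$-factors and $s_1$ $C_k$-factors of $G_1$ whose edge sets partition $E(G_1)$; likewise there is a collection $\mathcal{F}_2$ consisting of $r_2$ $C_m$-factors and $s_2$ $C_k$-factors partitioning $E(G_2)$. I would set $\mathcal{F}=\mathcal{F}_1\cup\mathcal{F}_2$ and claim this is the desired factorization of $G_1\cup G_2$.

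To justify the claim I would verify the three defining properties of a 2-factorization. First, every member of $\mathcal{F}$ is a 2-factor of $G_1\cup G_2$ of the required cycle type: here the decisive hypothesis is $V(G_1)=V(G_2)$, which gives $V(G_1\cup G_2)=V(G_1)=V(G_2)$, so any spanning $C_m$- or $C_k$-factor of $G_1$ (or of $G_2$) remains a spanning factor of the same type in $G_1\cup G_2$. Second, the members of $\mathcal{F}$ are pairwise edge-disjoint: factors within $\mathcal{F}_1$ are edge-disjoint and those within $\mathcal{F}_2$ are edge-disjoint by construction, while a factor of $\mathcal{F}_1$ is edge-disjoint from a factor of $\mathcal{F}_2$ because $G_1$ and $G_2$ are edge-disjoint by hypothesis. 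Third, $\mathcal{F}$ covers every edge of $G_1\cup G_2$, since $E(G_1\cup G_2)=E(G_1)\cup E(G_2)$ and each edge is covered by whichever of $\mathcal{F}_1,\mathcal{F}_2$ contains its host graph. Counting the members shows $\mathcal{F}$ has exactly $r_1+r_2$ $C_m$-factors and $s_1+s_2$ $C_k$-factors, which is what is asserted.

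The only point requiring any care—hardly an obstacle—is the bookkeeping around the common vertex set: one must confirm that a factor of a single component graph is still \emph{spanning} in the union. This is precisely why the statement demands that $G_1$ and $G_2$ share a vertex set rather than merely being edge-disjoint, and it is the one hypothesis that cannot be weakened.
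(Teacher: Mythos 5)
Your proof is correct, and it is exactly the routine verification the paper has in mind: the authors state this lemma without proof, calling it standard (it ``has been used in many papers''), and the intended argument is precisely yours---concatenate the two factorizations, noting that the common vertex set keeps every factor spanning in $G_1\cup G_2$ and that edge-disjointness of $G_1$ and $G_2$ gives a partition of $E(G_1\cup G_2)$. Nothing further is needed.
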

In this paper, we deal with the case $(m,k)=(3,7)$ with $n$ odd. Lemma 3.3 in \cite{Adams} shows that if $HW(21;r,s;3,7)$ exists for all nonnegative integers $r$ and $s$ with $r+s=10$ then the problem is settled. Unfortunately we can't construct all possible 2-factorizations of this kind for $n=21$. Instead, using 2-factorizations of $K_{7,7,7}$, we will prove the following result.
\begin{theorem}\label{1.1}
If $n\equiv 1\pmod 2$ and $rs\neq 0$ with $r+s=\frac{n-1}{2}$, then there exists an $HW(n;r,s;3,7)$ if and only if $n\equiv {21}\pmod{42}$ except possibly when $n=21$ and $r=2,4,6$.
\end{theorem}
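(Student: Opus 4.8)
The necessity is immediate: since $rs\neq 0$, the factorization contains at least one $C_3$-factor and at least one $C_7$-factor, so $3\mid n$ and $7\mid n$, whence $21\mid n$; combined with $n$ being odd this forces $n\equiv 21\pmod{42}$. For sufficiency my plan is to isolate a single ``switchable'' building block, determine how many triangle-factors it can contribute, and then propagate. Writing $n=21w$ with $w$ odd, I would split $K_n$ into complete graphs on groups of vertices, which Theorem~\ref{1.0} factors into pure $C_7$-factors (and, when the group order is divisible by $3$, into pure $C_3$-factors), together with complete tripartite graphs that carry the \emph{mixed} triangle/heptagon factors; everything is then reassembled by Lemmas~\ref{1.2} and~\ref{1.3}. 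This reduces the whole problem to counting how many triangle-factors a complete tripartite block can supply, so I would treat $n=21$ first, where exactly one such block is present.

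For $n=21$ I would use $K_{21}=3K_7\cup K_{7,7,7}$ (three vertex-disjoint copies of $K_7$ on the three parts, plus all edges between parts). Each $K_7$ splits into three Hamilton cycles, so by Lemma~\ref{1.2} the $3K_7$ contributes exactly three $C_7$-factors and \emph{no} triangle-factors (a $C_3$-factor of $K_7$ cannot exist). It then remains to factor $K_{7,7,7}$ into $r'$ triangle-factors and $7-r'$ heptagon-factors. Indexing the parts by $\mathbb{Z}_7$, the parallel classes $F_t=\{\,(a_i,b_{i+t},c_{i+2t}):i\in\mathbb{Z}_7\,\}$ for $t\in\mathbb{Z}_7$ give a triangle-factorization, and to produce heptagon-factors I would take a group of these classes and re-factor their union cyclically into $C_7$-factors. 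I expect this to realise the odd values $r'\in\{1,3,5,7\}$, yielding $HW(21;r,10-r;3,7)$ for $r\in\{1,3,5,7\}$; the heptagon extreme $r=0$ is just the all-$C_7$ factorization of Theorem~\ref{1.0}.

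To reach the triangle-heavy values I would start from the opposite endpoint, the all-triangle factorization $(K_{21};C_3^{10})$ of Theorem~\ref{1.0}, and instead peel off one or two heptagon-factors: showing that $K_{21}-H$ and $K_{21}-2H$ admit resolvable triangle decompositions delivers $r=9$ and $r=8$. Together these constructions realise every admissible $r$ for $n=21$ except the middle even values $r\in\{2,4,6\}$, matching the stated exceptional set. For general $n=21w$ with $w>1$ I would pass to the three-part split $K_n=3K_{7w}\cup K_{7w,7w,7w}$, where the three copies of $K_{7w}$ give $(7w-1)/2$ heptagon-factors via Theorem~\ref{1.0}, and the tripartite block $K_{7w,7w,7w}$, decomposed into $K_{7,7,7}$-subblocks by a transversal (Latin-square) structure on the $w$ subparts, supplies the mixing. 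Summing the per-block triangle-counts through Lemma~\ref{1.3} then produces a Minkowski sum of the single-block spectra; because roughly $w^2$ tripartite subblocks are now available, the gaps $\{2,4,6\}$ of one block are covered by the others, so the full range $1\le r\le\frac{n-1}{2}$ is attained with no exceptions once $w>1$.

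The main obstacle is the second step: realising \emph{every} intermediate triangle-count inside one tripartite block. The two endpoints—the $\mathbb{Z}_7$ triangle-factorization and the all-heptagon factorization—are easy, but interpolating seems to move the triangle-count only in the steps allowed by the cyclic recombination, and the pair-wise conversions that the even counts $2,4,6$ would require do not appear to exist at this size; this parity-type obstruction is precisely what leaves $r\in\{2,4,6\}$ open for $n=21$. I would therefore concentrate the work on an exhaustive difference-method search for mixed $K_{7,7,7}$-factorizations, on the ad hoc resolvable triangle decompositions of $K_{21}-H$ and $K_{21}-2H$, and finally on verifying that for $w>1$ the block combination genuinely closes all intermediate gaps.
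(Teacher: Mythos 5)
Your necessity argument and your treatment of $n=21$ track the paper closely: the paper also uses the split $K_{21}=3K_7\cup K_{7,7,7}$, realizes $\alpha\in\{0,1,3,5,7\}$ triangle-factors inside $K_{7,7,7}$ via cyclic difference classes $E_{01}(d_0)\cup E_{12}(d_1)\cup E_{20}(d_2)$ with $d_0+d_1+d_2\equiv 0\pmod 7$, re-factoring paired difference classes into $C_7$-factors (Lemmas~\ref{2.1}--\ref{2.7}), and it obtains $r=8,9$ by exactly the route you sketch, namely resolvable triangle decompositions of $K_{21}$ minus two, respectively one, $C_7$-factors (Lemmas~\ref{3.2} and~\ref{3.3}, via explicit starter factors developed mod~$7$). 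However, at $n=21$ you only \emph{assert} that those two decompositions exist; they are the hard constructive core of this case. Note that they cannot respect the $3K_7\cup K_{7,7,7}$ split, since the $3K_7$ part forces at least three $C_7$-factors and hence caps the triangle count at $7$; your peeling idea correctly abandons the split, but without the explicit starters the cases $r=8,9$ remain unproven, and you acknowledge as much.

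The fatal gap is in the general case. Your split $K_{21w}=3K_{7w}\cup K_{7w,7w,7w}$ quarantines all edges inside the three copies of $K_{7w}$ into pure $C_7$-factors: Theorem~\ref{1.0} is uniform (it only covers $r=0$ or $s=0$), and $K_{7w}$ admits a $C_3$-factorization only when $3\mid 7w$, i.e.\ $3\mid w$. Hence for $w\equiv\pm1\pmod 3$ (e.g.\ $w=5$, $n=105$) your construction yields at most $r\le 7w$ triangle-factors, all coming from the tripartite block, while $r$ must range up to $\frac{n-1}{2}=\frac{21w-1}{2}>7w$; for $n=105$ the values $36\le r\le 52$ are unreachable, contradicting your claim that the full range is attained with no exceptions once $w>1$. (A smaller point: the Minkowski sum runs over the $w$ parallel classes of $K_{7,7,7}$-blocks, not over all $w^2$ blocks, since Lemma~\ref{1.2} forces every block within one $K_{7,7,7}$-factor to carry the same $(\alpha,\beta)$ profile; this still covers $[0,7w]$ for $w\ge 2$, but it does not remove the cap.) The paper avoids the cap by a different decomposition (Lemma~\ref{4.1}): blowing up a Kirkman triple system on $6t+3$ groups of size $7$ gives $3t$ $K_{7,7,7}$-factors together with one $K_{21}$-factor of $K_n$, so the within-group edges are absorbed into a $K_{21}$-factor whose triangle spectrum $\gamma\in\{0,1,3,5,7,8,9,10\}$ (Lemma~\ref{3.4}) reaches $10$; then $r_{\max}=21t+10=\frac{n-1}{2}$ exactly, and the case analysis in Lemma~\ref{4.2} and the main proof fills in every intermediate value. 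To repair your argument you would need both the explicit $r=8,9$ constructions at $n=21$ and a factor-level decomposition of this Kirkman type in place of your three-part split.
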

In Section 2, we decompose $K_{7,7,7}$ into $C_3$-factors and $C_7$-factors. In Section 3, we deal with $K_{21}$. In Section 4, we show how to decompose $K_n$ into $K_{7,7,7}$-factors and $K_{21}$-factors, then prove Theorem \ref{1.1}.

\section{Factorizations of $K_{7,7,7}$}
Let $V(K_{7,7,7})=\{j_i\mid j\in Z_7,i\in Z_3\}$, and let $V_i=\{j_i\mid j\in Z_7\}$ for $i\in Z_3$ be the three partite sets of $K_{7,7,7}$. Denote the complete graph on $V_i$ by $K_{V_i}$, the complete bipartite graph on $V_i$ and $V_j$ by $K_{V_i,V_j}$, and the complete tripartite graph $K_{7,7,7}$ on $V_0$, $V_1$ and $V_2$ by $K_{V_0,V_1,V_2}$. Then $$E(K_{V_0,V_1,V_2})=E(K_{V_0,V_1})\cup E(K_{V_1,V_2})\cup E(K_{V_2,V_0}).$$
For $i,j\in Z_3$ and $d\in Z_7$, let $E_{ij}(d)=\{\{l_i,(l+d)_j\}\mid l\in Z_7\}$. It is easy to verify that
$$E(K_{V_i})=\bigcup\limits_{d=1}^{3}{E_{ii}(d)},$$
$$E(K_{V_i,V_j})=\bigcup\limits_{d=0}^{6}{E_{ij}(d)}\ \text{for}\  i\neq j.$$

Some of the techniques used in the following lemmas are widely used in combinatorial designs, see \cite{Liu} for example. In the beginning we give a few basic constructions. The first two lemmas are easy to see, so we omit the proofs.
\begin{lemma}\label{2.1}
Let $d_0,d_1,d_2\in Z_7$. If $d_0+d_1+d_2\equiv 0\pmod 7$, then the edges of $E_{01}(d_0)\cup E_{12}(d_1)\cup E_{20}(d_2)$ form a $C_3$-factor of $K_{7,7,7}$.
\end{lemma}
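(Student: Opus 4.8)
The plan is to verify directly that $F:=E_{01}(d_0)\cup E_{12}(d_1)\cup E_{20}(d_2)$ is a spanning $2$-regular subgraph of $K_{7,7,7}$ every component of which is a triangle. First I would compute the degree of each vertex by locating, for a fixed vertex, the unique edge of each relevant $E_{ij}$ incident with it. A vertex $a_0\in V_0$ lies on exactly one edge of $E_{01}(d_0)$, namely $\{a_0,(a+d_0)_1\}$, and on exactly one edge of $E_{20}(d_2)$, namely $\{(a-d_2)_2,a_0\}$, and meets no edge of $E_{12}(d_1)$; hence $\deg(a_0)=2$. The symmetric bookkeeping shows each vertex of $V_1$ meets exactly one edge of $E_{01}(d_0)$ and one of $E_{12}(d_1)$, and each vertex of $V_2$ meets exactly one edge of $E_{12}(d_1)$ and one of $E_{20}(d_2)$. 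Thus $F$ is $2$-regular on all $21$ vertices, so it is a disjoint union of cycles; moreover, since edges run only between consecutive parts in the cyclic order $V_0,V_1,V_2$, the traversal direction is forced and every cycle has length a multiple of $3$.

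Next I would trace the cycle through an arbitrary vertex $a_0$. Following the forced edges in turn gives
$$a_0 \;\longrightarrow\; (a+d_0)_1 \;\longrightarrow\; (a+d_0+d_1)_2 \;\longrightarrow\; (a+d_0+d_1+d_2)_0.$$
This is precisely where the hypothesis enters: since $d_0+d_1+d_2\equiv 0\pmod 7$, the last vertex equals $a_0$, so the walk closes after exactly three steps and the component containing $a_0$ is the triangle $\{a_0,(a+d_0)_1,(a+d_0+d_1)_2\}$. Letting $a$ range over $Z_7$ then produces seven such triangles whose $V_0$-vertices $a_0$ are distinct, so the triangles are pairwise vertex-disjoint and together cover all $21$ vertices, giving the desired $C_3$-factor.

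This lemma is essentially a direct verification, so there is no serious obstacle; the only points requiring attention are the incidence bookkeeping used for the degree count and the observation that the congruence $d_0+d_1+d_2\equiv 0$ is exactly what forces each cycle to have length three rather than some larger multiple of $3$ dividing $21$.
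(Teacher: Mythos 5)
Your proof is correct: the incidence bookkeeping establishes $2$-regularity, and tracing the forced walk $a_0 \rightarrow (a+d_0)_1 \rightarrow (a+d_0+d_1)_2 \rightarrow (a+d_0+d_1+d_2)_0 = a_0$ shows every component is a triangle, with the congruence $d_0+d_1+d_2\equiv 0\pmod 7$ used exactly where it must be. The paper omits the proof of this lemma entirely (``easy to see''), and your direct verification is precisely the routine argument the authors intend.
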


\begin{lemma}\label{2.2}
If $(d,7)=1$, then the edges of $E_{ii}(d)$ form a Hamilton cycle, i.e. a $C_7$-factor of $K_{V_i}$.
\end{lemma}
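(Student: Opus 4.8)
The plan is to show that $E_{ii}(d)$ is a $2$-regular spanning subgraph of $K_{V_i}$ and then to verify that the coprimality hypothesis forces this subgraph to be connected, so that it is a single $7$-cycle through all the vertices of $V_i$.

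First I would check degrees. The vertex $l_i$ occurs in exactly two edges of $E_{ii}(d)$: as the first coordinate of $\{l_i,(l+d)_i\}$ and as the second coordinate of $\{(l-d)_i,l_i\}$, with indices read in $Z_7$. Since $(d,7)=1$ gives $d\not\equiv 0\pmod 7$, these are genuine edges (no loops), and since $7$ is odd we also have $2d\not\equiv 0\pmod 7$, so the two neighbours $(l+d)_i$ and $(l-d)_i$ are distinct. Hence every vertex has degree exactly $2$, and $E_{ii}(d)$ is a disjoint union of cycles covering $V_i$.

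Next I would trace the component containing $0_i$. Following consecutive edges produces the walk $0_i,\,d_i,\,(2d)_i,\,(3d)_i,\dots$, whose $k$-th vertex is $(kd)_i$. This walk first returns to $0_i$ at the least positive $k$ with $kd\equiv 0\pmod 7$. Because $7$ is prime and $d\not\equiv 0$, the element $d$ has additive order $7$ in $Z_7$, so that least $k$ is $7$; thus the vertices $0_i,d_i,\dots,(6d)_i$ are pairwise distinct and the component is a single $7$-cycle, which is a Hamilton cycle of $K_{V_i}$.

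There is no real obstacle here; the only place the hypothesis is used is the final count of the cycle length, where $(d,7)=1$ (equivalently the primality of $7$ together with $d\neq 0$) is exactly what prevents the walk from closing up prematurely into several shorter cycles. For general $n$ the same trace would instead yield $(d,n)$ cycles each of length $n/(d,n)$, so the coprimality condition is precisely what upgrades the $2$-factor to a single Hamilton cycle. This is why the authors can reasonably regard the statement as immediate and omit the proof.
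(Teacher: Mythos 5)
Your proof is correct, and since the paper explicitly omits the proof of this lemma as ``easy to see,'' there is no authorial argument to diverge from: what you give is precisely the standard one the authors have in mind, namely that $E_{ii}(d)$ is $2$-regular and the walk $0_i,d_i,(2d)_i,\ldots$ closes up only after $7$ steps because $d$ has additive order $7$ in $Z_7$. Your closing remark that for general $n$ one gets $(d,n)$ cycles of length $n/(d,n)$ correctly identifies the role of the coprimality hypothesis and is consistent with how the paper uses these circulant edge classes throughout Section 2.
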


\begin{lemma}\label{2.3}
The edges of $\bigcup\limits_{d\in\{1,6\}}^{}{(E_{01}(d)\cup E_{12}(d)\cup E_{20}(d))}$ can be decomposed into 2 $C_7$-factors of $K_{7,7,7}$.
\end{lemma}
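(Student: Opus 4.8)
The plan is to produce the two required $C_7$-factors explicitly and then verify that together they use each edge of $H:=\bigcup_{d\in\{1,6\}}\bigl(E_{01}(d)\cup E_{12}(d)\cup E_{20}(d)\bigr)$ exactly once. I begin with the bookkeeping: $H$ has $2\cdot 3\cdot 7=42$ edges and is $4$-regular, so a splitting into two $2$-regular factors with $21$ edges each is numerically consistent with a pair of $C_7$-factors (three $7$-cycles apiece). The key structural remark is that, since $6\equiv-1\pmod 7$, every edge of $H$ joins a vertex of label $l$ to a vertex of label $l\pm1$; hence each edge runs between two \emph{consecutive} labels of $Z_7$. A direct check also shows that the obvious split into the $d=1$ part and the $d=6$ part fails, since each of those is a single Hamilton $21$-cycle (one step of a fixed difference cycles through all three partite classes before the label can repeat), so the two factors must genuinely interleave the two differences.

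Next I would set up a correspondence that makes the construction transparent. Orient every edge toward the larger of its two consecutive labels; a $7$-cycle all of whose edges increase the label then visits labels $0,1,\dots,6$ once each and is encoded by the sequence $s\colon Z_7\to Z_3$ giving the partite class $s_k$ of its label-$k$ vertex. Since an $E_{01},E_{12},E_{20}$ edge changes the class by exactly $\pm1\pmod 3$, such an $s$ describes a genuine $7$-cycle iff $s_{k+1}\neq s_k$ for every $k$ taken cyclically; moreover the edge between labels $k$ and $k+1$ is the $d=1$ edge when $s_{k+1}=s_k+1$ and the $d=6$ edge when $s_{k+1}=s_k-1$. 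Thus a $C_7$-factor becomes a triple of such sequences whose $k$-th entries form a permutation of $Z_3$ for each $k$ (so that the three $7$-cycles partition $V_0,V_1,V_2$), and the two factors are edge-disjoint and exhaustive exactly when, between each pair of consecutive labels, the six class-changes they use are the six ordered pairs of distinct elements of $Z_3$.

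Finally I reduce this to a short finite check. Writing the three class values at label $k$ as a permutation $\sigma_k\in S_3$, the three class-changes across $k,k+1$ form the graph of the derangement $\phi_k=\sigma_{k+1}\sigma_k^{-1}\in\{(0\,1\,2),(0\,2\,1)\}$, and the two $3$-cycles partition the six ordered pairs; hence the two factors cover all edges iff at every $k$ they choose the two \emph{different} $3$-cycles. The cyclic closure $\sigma_7=\sigma_0$ forces the number of positions at which a factor uses $(0\,1\,2)$ to be $\equiv2\pmod3$ (that is, $2$ or $5$), and the complementary factor then meets the same congruence automatically. Any admissible choice finishes the proof; for instance, taking the difference pattern $\bigl((0\,1\,2),(0\,1\,2),(0\,2\,1),(0\,2\,1),(0\,2\,1),(0\,2\,1),(0\,2\,1)\bigr)$ for the first factor and its positionwise complement for the second, with $\sigma_0=\mathrm{id}$, yields two explicit $C_7$-factors that I would list and verify directly. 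The only genuine obstacle is this mod-$3$ closure: the cycle length $7$ is coprime to the number $3$ of partite classes, which is precisely why a single difference gives a $21$-cycle and why one must balance the two $3$-cycles so the class sequence returns to its start after seven steps; once that congruence is respected, both disjointness and exhaustiveness fall out of the permutation/derangement bookkeeping.
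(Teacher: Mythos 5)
Your proof is correct, but it reaches the decomposition by a genuinely different route than the paper. The paper's proof is a bare exhibition: it writes down the two factors $F_1=\{(0_i,1_{i+1},2_{i+2},3_{i},4_{i+1},5_i,6_{i+1})\mid i\in Z_3\}$ and $F_2=\{(0_i,1_{i+2},2_{i+1},3_{i},4_{i+2},5_i,6_{i+2})\mid i\in Z_3\}$ and leaves the edge-by-edge check to the reader. You instead encode each label-increasing $7$-cycle by its class sequence $s\colon Z_7\to Z_3$, observe that a $C_7$-factor is a triple of such sequences forming a permutation $\sigma_k$ of $Z_3$ at each label, reduce disjointness-plus-exhaustiveness to the two factors choosing different derangements $\phi_k=\sigma_{k+1}\sigma_k^{-1}\in\{(0\,1\,2),(0\,2\,1)\}$ at every position (their graphs do partition the six ordered pairs), and isolate the single obstruction, the closure congruence $p\equiv 2\pmod 3$ on the number of $(0\,1\,2)$-positions, which the complementary pattern meets automatically since $7-2\equiv 2\pmod 3$. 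I checked your $d=1$/$d=6$ dictionary against the paper's convention $E_{ij}(d)=\{\{l_i,(l+d)_j\}\}$ and it is right, and your sample pattern $(+1,+1,-1,-1,-1,-1,-1)$ with $\sigma_0=\mathrm{id}$ does yield three valid $7$-cycles whose complement-pattern partner completes the decomposition; moreover, since the edges a factor uses between labels $k$ and $k+1$ depend only on $\phi_k$ and not on the base permutation, your claim that any admissible choice finishes the proof is fully justified. In fact the paper's $F_1$ and $F_2$ are exactly an instance of your scheme, with complementary patterns having $p=5$ and $p=2$ respectively, so your argument strictly subsumes theirs. What each buys: the paper's construction is shorter and immediately verifiable; yours explains why such factors exist (the coprimality of the cycle length $7$ with the $3$ classes, precisely your remark that a single difference gives a $21$-cycle), parametrizes all label-monotone solutions, and, after relabeling by a unit of $Z_7$ via $l\mapsto d^{-1}l$, would prove Lemmas~\ref{2.4} and~\ref{2.5} by the identical argument. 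One small stylistic point: your closing promise to ``list and verify directly'' undersells your own framework --- once the congruence is satisfied, the preceding bookkeeping already certifies correctness, and no direct verification is needed.
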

\begin{proof}
Let
\begin{align*}
F_1 & =\{(0_i,1_{i+1},2_{i+2},3_{i},4_{i+1},5_i,6_{i+1})\mid i\in Z_3\},\\
F_2 & =\{(0_i,1_{i+2},2_{i+1},3_{i},4_{i+2},5_i,6_{i+2})\mid i\in Z_3\},
\end{align*}
then both $F_1$ and $F_2$ are $C_7$-factors of $K_{7,7,7}$. It is straightforward to verify that
$$E(F_1)\cup E(F_2)=\bigcup\limits_{d\in\{1,6\}}^{}{(E_{01}(d)\cup E_{12}(d)\cup E_{20}(d))}.$$
\end{proof}

\begin{lemma}\label{2.4}
The edges of $\bigcup\limits_{d\in\{2,5\}}^{}{(E_{01}(d)\cup E_{12}(d)\cup E_{20}(d))}$ can be decomposed into 2 $C_7$-factors of $K_{7,7,7}$.
\end{lemma}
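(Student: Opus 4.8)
The plan is to avoid any fresh cycle-chasing and instead reduce Lemma~\ref{2.4} to the already-established Lemma~\ref{2.3} via a multiplier automorphism of $K_{7,7,7}$. The governing observation is that the edge families $E_{ij}(d)$ behave well under scaling of the $Z_7$-coordinate: since $2$ is a unit in $Z_7$, multiplication by $2$ permutes the difference classes, and in particular sends $\{1,6\}$ to $\{2,5\}$ because $2\cdot 1=2$ and $2\cdot 6=12\equiv 5\pmod 7$. Thus the target edge set is nothing but the image, under this scaling, of the edge set already decomposed in Lemma~\ref{2.3}.

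Concretely, first I would define the map $\phi:V(K_{7,7,7})\to V(K_{7,7,7})$ by $\phi(j_i)=(2j)_i$. This fixes each partite set $V_i$ and restricts to a bijection on it (as $2$ is invertible mod $7$), so $\phi$ is an automorphism of $K_{7,7,7}$ preserving the tripartition. The next step is the routine verification that $\phi$ carries each difference class to the scaled one: for an edge $\{l_i,(l+d)_j\}\in E_{ij}(d)$ we get $\phi(\{l_i,(l+d)_j\})=\{(2l)_i,(2l+2d)_j\}$, and since $l$ ranges over $Z_7$ so does $2l$, whence $\phi(E_{ij}(d))=E_{ij}(2d)$ for all $i,j\in Z_3$ and all $d\in Z_7$.

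Combining these facts, $\phi$ maps $E_{01}(d)\cup E_{12}(d)\cup E_{20}(d)$ onto $E_{01}(2d)\cup E_{12}(2d)\cup E_{20}(2d)$, so taking the union over $d\in\{1,6\}$ shows that $\phi$ sends the edge set of Lemma~\ref{2.3} exactly onto $\bigcup_{d\in\{2,5\}}(E_{01}(d)\cup E_{12}(d)\cup E_{20}(d))$. Applying $\phi$ to the two factors $F_1,F_2$ of Lemma~\ref{2.3} then yields the desired decomposition: because $\phi$ is a graph automorphism fixing the partition, $\phi(F_1)$ and $\phi(F_2)$ are again $C_7$-factors, and their edges partition the target set. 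Explicitly, multiplying every $Z_7$-label in $F_1$ and $F_2$ by $2$ gives
\begin{align*}
F_1' & =\{(0_i,2_{i+1},4_{i+2},6_i,1_{i+1},3_i,5_{i+1})\mid i\in Z_3\},\\
F_2' & =\{(0_i,2_{i+2},4_{i+1},6_i,1_{i+2},3_i,5_{i+2})\mid i\in Z_3\},
\end{align*}
which one may also check directly if a self-contained statement is preferred.

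There is no serious obstacle here; the only point requiring care is the bookkeeping that $\phi$ really acts on the difference classes as claimed, and in particular that it preserves rather than scrambles the partite labels $i$, since the cycles in $F_1,F_2$ hop among all three parts. Once that is pinned down the conclusion is immediate and entirely in the spirit of Lemma~\ref{2.3}.
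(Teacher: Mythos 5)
Your proof is correct, but it takes a different route from the paper. The paper's proof of Lemma~\ref{2.4} is a direct exhibition: it simply writes down the two factors
\begin{align*}
F_1 & =\{(0_i,2_{i+1},4_{i+2},6_{i},1_{i+1},3_i,5_{i+1})\mid i\in Z_3\},\\
F_2 & =\{(0_i,2_{i+2},4_{i+1},6_{i},1_{i+2},3_i,5_{i+2})\mid i\in Z_3\}
\end{align*}
and declares the verification ``similar to Lemma~\ref{2.3},'' i.e.\ one checks by hand that these are $C_7$-factors and that their edges exhaust $\bigcup_{d\in\{2,5\}}(E_{01}(d)\cup E_{12}(d)\cup E_{20}(d))$. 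You instead transfer Lemma~\ref{2.3} via the multiplier automorphism $\phi(j_i)=(2j)_i$, and all the steps you need are sound: $\phi$ fixes each partite set, hence is an automorphism of $K_{7,7,7}$ sending $C_7$-factors to $C_7$-factors; $\phi(E_{ij}(d))=E_{ij}(2d)$ by the computation you give; and $2\cdot\{1,6\}=\{2,5\}$ in $Z_7$, so $\phi$ carries the decomposed edge set of Lemma~\ref{2.3} exactly onto the target set. Notably, your explicit $F_1',F_2'$ coincide verbatim with the paper's $F_1,F_2$, so in substance you have produced the same decomposition while replacing the paper's (implicit) cycle-by-cycle check with a structural argument. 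What your approach buys is economy and generality: the identical argument with multiplier $3$ (since $3\cdot\{1,6\}=\{3,4\}$ in $Z_7$) yields Lemma~\ref{2.5} as well, and indeed multiplying the Lemma~\ref{2.3} factors by $3$ reproduces exactly the factors listed in the paper's proof of that lemma; what the paper's direct listing buys is self-containedness, since each lemma can be verified in isolation without setting up the automorphism machinery.
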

\begin{proof}
The proof is similar to Lemma \ref{2.3}, let the 2 $C_7$-factors be
\begin{align*}
F_1 & =\{(0_i,2_{i+1},4_{i+2},6_{i},1_{i+1},3_i,5_{i+1})\mid i\in Z_3\},\\
F_2 & =\{(0_i,2_{i+2},4_{i+1},6_{i},1_{i+2},3_i,5_{i+2})\mid i\in Z_3\}.
\end{align*}
\end{proof}

\begin{lemma}\label{2.5}
The edges of $\bigcup\limits_{d\in\{3,4\}}^{}{(E_{01}(d)\cup E_{12}(d)\cup E_{20}(d))}$ can be decomposed into 2 $C_7$-factors of $K_{7,7,7}$.
\end{lemma}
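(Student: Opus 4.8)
The plan is to imitate the explicit constructions of Lemmas \ref{2.3} and \ref{2.4}, since the difference set $\{3,4\}$ plays here exactly the role that $\{1,6\}$ and $\{2,5\}$ played there; note that $4\equiv-3\pmod 7$, just as $6\equiv-1$ and $5\equiv-2$. First I would list the seven elements of $Z_7$ as successive multiples of $3$, namely $0,3,6,2,5,1,4$, so that every pair of consecutive entries (including the wrap-around) differs by $3$. Attaching to this value-sequence the same two subscript patterns used before, $i,i+1,i+2,i,i+1,i,i+1$ and $i,i+2,i+1,i,i+2,i,i+2$, produces the two candidate factors
\begin{align*}
F_1 & =\{(0_i,3_{i+1},6_{i+2},2_i,5_{i+1},1_i,4_{i+1})\mid i\in Z_3\},\\
F_2 & =\{(0_i,3_{i+2},6_{i+1},2_i,5_{i+2},1_i,4_{i+2})\mid i\in Z_3\}.
\end{align*}

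Next I would confirm that each $F_j$ is genuinely a $C_7$-factor, which is immediate from the subscript pattern: in every base cycle the three positions carrying subscript $i$ hold the values $\{0,1,2\}$, the three carrying subscript $i+1$ (respectively $i+2$ for $F_2$) hold $\{3,4,5\}$, and the single remaining position holds $\{6\}$. Letting $i$ run over $Z_3$ therefore places each value of $Z_7$ in each partite class $V_a$ exactly once, so the three cycles are vertex-disjoint and spanning. I would then check that $E(F_1)\cup E(F_2)$ is precisely $\bigcup_{d\in\{3,4\}}(E_{01}(d)\cup E_{12}(d)\cup E_{20}(d))$, using the observation implicit in the earlier lemmas that one position-edge of a base cycle, as $i$ ranges over $Z_3$, contributes exactly one edge to each of the three bipartite directions: a step advancing the subscript by $+1$ contributes difference $+3$, whereas a step advancing it by $+2$ (equivalently $-1$) contributes difference $-3\equiv4$, once the convention $E_{ab}(d)=E_{ba}(-d)$ is fixed. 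Since all consecutive value-differences equal $+3$, every edge of $F_1\cup F_2$ lands in one of the six target sets $E_{01}(3),E_{12}(3),E_{20}(3),E_{01}(4),E_{12}(4),E_{20}(4)$.

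The main obstacle, and the only part requiring genuine bookkeeping, is verifying that these $42$ edges hit each of the six sets exactly seven times without repetition, i.e.\ that $F_1$ and $F_2$ are edge-disjoint and jointly exhaustive. As in Lemma \ref{2.3}, I expect $F_1$ alone to cover five edges of each ``difference $3$'' set and two of each ``difference $4$'' set, while the conjugate factor $F_2$ supplies the complementary two and five; the delicate point is only to confirm that these partial coverings are disjoint and complete. This reduces to a finite check that I expect to be entirely routine, so the lemma should follow exactly as its two predecessors do, justifying the author's likely remark that the verification is straightforward.
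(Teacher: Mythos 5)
Your proposal is correct and takes essentially the same route as the paper: the two factors $F_1$ and $F_2$ you construct are exactly the ones given in the paper's proof of Lemma \ref{2.5}, obtained by the same analogy with Lemmas \ref{2.3} and \ref{2.4} (value-differences all $\equiv 3$, subscript patterns $i,i+1,i+2,i,i+1,i,i+1$ and its conjugate). The counting you sketch checks out ($F_1$ covers $5$ edges of each $E_{ab}(3)$ and $2$ of each $E_{ab}(4)$, with $F_2$ supplying the complementary $2$ and $5$, disjointly), so your write-up merely makes explicit the finite verification that the paper leaves implicit.
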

\begin{proof}
Let the 2 $C_7$-factors be
\begin{align*}
F_1 & =\{(0_i,3_{i+1},6_{i+2},2_{i},5_{i+1},1_i,4_{i+1})\mid i\in Z_3\},\\
F_2 & =\{(0_i,3_{i+2},6_{i+1},2_{i},5_{i+2},1_i,4_{i+2})\mid i\in Z_3\}.
\end{align*}
\end{proof}
\begin{lemma}\cite{Alspach2}\label{2.6}
Let $K_{d(m)}$ be the complete multipartite graph with $d$ parts of size $m$, if $d$ and $m$ are both odd integers, then there is a 2-factorization of $K_{d(m)}$, in which each 2-factor is a $C_m$-factor.
\end{lemma}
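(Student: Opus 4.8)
The plan is to realize $K_{d(m)}$ as a Cayley graph and attack the statement with the difference method. I would identify the vertex set with $\mathbb{Z}_{dm}$ and take the $d$ parts to be the cosets of the subgroup $H=\langle d\rangle$ of order $m$, so that two vertices lie in the same part exactly when they differ by a multiple of $d$. Then $K_{d(m)}$ is precisely the Cayley graph on $\mathbb{Z}_{dm}$ with connection set $S=\{\delta : d\nmid\delta\}$, every vertex has degree $(d-1)m$ (which is even since $d$ is odd), and any $C_m$-factorization must therefore consist of $(d-1)m/2$ factors, each built from $d$ cycles of length $m$. The goal becomes to exhibit a small family of base $m$-cycles (a \emph{starter}) together with a cyclic action under which the translates are edge-disjoint and exhaust $S$, so that developing the starter produces all $(d-1)m/2$ factors simultaneously. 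The explicit $d=3$, $m=7$ factors worked out in Lemmas \ref{2.3}--\ref{2.5} are exactly the prototype for such base cycles and indicate the pattern to aim for in general.

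Concretely, each base $m$-cycle must weave through the parts: if its successive vertices lie in parts $p_0,p_1,\dots,p_{m-1}$ with $p_{t+1}\equiv p_t\pm1 \pmod d$, then closing the cycle forces the net displacement $\sum_t (p_{t+1}-p_t)$ to be a multiple of $d$, and because $m$ is odd this displacement is itself odd, hence a \emph{nonzero} multiple of $d$. So I would design the starter cycles to wind once (displacement $\pm d$) around the parts while choosing the within-part increments so that the $d$ translates partition $\mathbb{Z}_{dm}$ and, across the factors, each difference in $S$ is covered exactly once. A tempting structural alternative is to recurse on $d$ in steps of two, passing from $K_{d(m)}$ to $K_{(d+2)(m)}$; but the edges newly added by two parts do \emph{not} form a regular graph, so the recursion cannot merely adjoin fresh factors and would have to reroute existing cycles, which is delicate. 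For this reason I would keep the direct difference-method construction as the primary route.

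The hard part is the existence of the starter itself: one must produce base $m$-cycles that simultaneously (i) have length exactly $m$, (ii) partition the vertex set into a $C_m$-factor, and (iii) together with their translates cover every difference in $S$ once. This is a genuinely global design problem that cannot be localized. In particular, it is natural to first split $K_d$ into $(d-1)/2$ Hamilton cycles (possible since $d$ is odd) and then blow up each one to the lexicographic product $C_d[\overline{K_m}]$, but when $m<d$ the graph $C_d[\overline{K_m}]$ contains no cycle of length $m$ at all, since by the displacement argument above returning to the starting part requires at least $d>m$ steps; hence no per-Hamilton-cycle factorization exists, and the $m$-cycles must use edges joining many different pairs of parts at once. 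Consequently the crux is a case analysis governed by $\gcd(d,m)$ and by the relative sizes of $d$ and $m$, controlling how the starter cycles wind around the parts and tile $\mathbb{Z}_{dm}$. I expect this bookkeeping, rather than any single clever cycle, to be where the real work lies, which is presumably why the statement is quoted from \cite{Alspach2} rather than reproved here.
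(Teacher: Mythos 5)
The paper offers no proof of Lemma~\ref{2.6} at all---it is quoted from \cite{Alspach2}---so the only question is whether your argument stands on its own, and it does not: it is a research plan, not a proof. The correct but routine part is the setup: realizing $K_{d(m)}$ as the circulant on $\mathbb{Z}_{dm}$ with connection set $S=\{\delta : d\nmid\delta\}$ and counting $(d-1)m/2$ factors. Everything after that is deferred. The entire content of the lemma is the existence of the starter cycles satisfying your conditions (i)--(iii), and you explicitly leave that open (``the hard part is the existence of the starter itself\dots I expect this bookkeeping\dots to be where the real work lies''). A proof that reduces the statement to an unexhibited ``genuinely global design problem'' has proved nothing beyond the necessary conditions.

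Moreover, the specific ansatz you commit to is provably unworkable on part of the lemma's range. You restrict base cycles to steps between adjacent parts, $p_{t+1}\equiv p_t\pm 1\pmod d$---but this is an assumption, not a consequence, since edges of $K_{d(m)}$ join any two distinct parts---and you then require the cycles to wind once around the parts (net displacement $\pm d$). An $m$-cycle making $m$ steps of $\pm 1$ has net displacement of absolute value at most $m$, so displacement $\pm d$ is impossible whenever $m<d$; indeed, by your own parity-and-closure argument no adjacent-step $m$-cycle exists at all in that regime. You deploy exactly this observation to dismiss the blow-up of Hamilton cycles of $K_d$ into $C_d[\overline{K_m}]$, without noticing that it defeats your own starter design verbatim: for instance $d=9$, $m=3$ is within the lemma's scope, yet no triangle can wind around nine parts. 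Separately, even when $m\geq d$, adjacent-part edges realize only differences $\equiv\pm 1\pmod d$, so a family of such starters can never exhaust $S$, which contains every residue class $j\not\equiv 0\pmod d$; you would need cycles stepping between parts at every distance $j$, with precisely the $\gcd(d,j)$-governed case analysis you allude to but do not carry out. In short, the proposal identifies the right difficulty and sound necessary conditions, but the lemma remains exactly as unproved in your write-up as in the paper, which is why both must defer to \cite{Alspach2}.
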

Now we decompose $K_{7,7,7}$ into $C_3$-factors and $C_7$-factors.
\begin{lemma}\label{2.7}
$(K_{7,7,7};C_3^\alpha,C_7^\beta)$ exists for $\alpha\in\{0,1,3,5,7\}$ with $\alpha+\beta=7$.
\end{lemma}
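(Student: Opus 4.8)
The plan is to construct the decomposition by counting factors and assembling pieces from the edge-set partition established earlier. The complete tripartite graph $K_{7,7,7}$ is $14$-regular, so any 2-factorization consists of exactly $7$ factors, which is consistent with $\alpha+\beta=7$. The strategy is to decompose $E(K_{7,7,7})$ into building blocks that each contribute a controlled number of $C_3$-factors or $C_7$-factors, and then combine them via Lemma~\ref{1.3}.

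First I would recall that for each $d\in Z_7$ the ``diagonal'' block $E_{01}(d)\cup E_{12}(d)\cup E_{20}(d)$ spans all edges of $K_{7,7,7}$ when $d$ ranges over $Z_7$. Lemma~\ref{2.1} shows that for any triple $d_0+d_1+d_2\equiv 0\pmod 7$ the set $E_{01}(d_0)\cup E_{12}(d_1)\cup E_{20}(d_2)$ is a single $C_3$-factor; in particular each block with a fixed $d\neq 0$ (taking $d_0=d_1=d_2=d$ forces $3d\equiv 0$, which fails unless $d=0$, so instead I pair values) can be reorganized into $C_3$-factors. Concretely, I would use the $d=0$ block together with suitable mixed blocks $E_{01}(d_0)\cup E_{12}(d_1)\cup E_{20}(d_2)$ to produce the required number of triangle-factors, while Lemmas~\ref{2.3}, \ref{2.4}, and \ref{2.5} supply the heptagon-factors two at a time from the paired blocks $\{1,6\}$, $\{2,5\}$, and $\{3,4\}$ respectively.

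The key combinatorial step is to realize each admissible value $\alpha\in\{0,1,3,5,7\}$ by choosing how many of the three paired blocks are converted into $C_7$-factors (each such pair gives $2$ heptagon-factors) versus how many difference-values are regrouped into $C_3$-factors. Since the three pairs $\{1,6\},\{2,5\},\{3,4\}$ together with $d=0$ account for all seven difference classes, and each pair can independently be rendered either as two $C_7$-factors (Lemmas~\ref{2.3}--\ref{2.5}) or broken into triangle-factors via Lemma~\ref{2.1} applied to appropriate triples summing to $0\pmod 7$, the parity of $\beta=7-\alpha$ and the ``two-at-a-time'' nature of the heptagon construction explain exactly why $\alpha$ is forced to lie in $\{0,1,3,5,7\}$: converting $t$ pairs to heptagons yields $\beta=2t$ or $\beta=2t+1$ depending on whether the $d=0$ class is used as a $C_7$-factor, giving odd values of $\alpha$ together with the extreme case.

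The main obstacle I anticipate is verifying that the residual difference classes, after extracting $2t$ heptagon-factors, genuinely decompose into exactly $\alpha$ triangle-factors with no leftover edges. This requires exhibiting explicit triples $(d_0,d_1,d_2)$ with $d_0+d_1+d_2\equiv 0\pmod 7$ that partition the remaining blocks; Lemma~\ref{2.1} guarantees each such triple is a single $C_3$-factor, but one must check that the chosen triples use each remaining difference class in each bipartite position exactly once. I would handle this by tabulating, for each target $\alpha$, an explicit assignment of the seven difference values to the three positions $(E_{01},E_{12},E_{20})$ so that every column is a permutation of the used classes and every row sums to $0\pmod 7$ — essentially a small Latin-square-type argument. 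Once this bookkeeping is done, Lemma~\ref{1.3} glues the pieces together and the result follows for all five admissible values of $\alpha$.
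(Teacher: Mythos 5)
Your overall scheme for $\alpha\in\{1,3,5,7\}$ matches the paper's proof exactly: convert $t$ of the three pairs $\{1,6\},\{2,5\},\{3,4\}$ into $2t$ heptagon-factors via Lemmas~\ref{2.3}--\ref{2.5}, and regroup the remaining $7-2t$ difference classes into triangle-factors by exhibiting triples $(d_0,d_1,d_2)$ with $d_0+d_1+d_2\equiv 0\pmod 7$ that use each leftover class once in each of the three positions --- this is precisely the bookkeeping the paper carries out explicitly (e.g., for $\alpha=5$ it uses the triples $(0,1,6),(2,0,5),(5,2,0),(6,6,2),(1,5,1)$ on the leftover classes $\{0,1,2,5,6\}$).

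However, there is a genuine gap in your treatment of $\alpha=0$. You claim that converting $t$ pairs yields $\beta=2t$ or $\beta=2t+1$ ``depending on whether the $d=0$ class is used as a $C_7$-factor.'' But the $d=0$ block $E_{01}(0)\cup E_{12}(0)\cup E_{20}(0)$ is the disjoint union of the seven triangles $(l_0,l_1,l_2)$, $l\in Z_7$; it is a $C_3$-factor by Lemma~\ref{2.1} and can never be a $C_7$-factor. Consequently your construction caps out at $\beta=6$ (all three pairs converted, the $d=0$ class left over as one triangle-factor, giving $\alpha=1$), and the case $\alpha=0$, $\beta=7$ is unreachable by this block method: any partition of the seven difference classes into your building blocks forces at least one triangle-factor. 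This is exactly why the paper handles $\alpha=0$ separately, invoking Lemma~\ref{2.6} (Alspach et al.), which directly provides a $C_7$-factorization of the complete multipartite graph $K_{3(7)}=K_{7,7,7}$ since both $3$ and $7$ are odd. To repair your proof you must either cite that result for $\alpha=0$ or construct seven $C_7$-factors by some means outside the constant-difference block framework; within the framework as you set it up, the case fails.
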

\begin{proof}
For $\alpha=0$, $(K_{7,7,7};C_7^7)$ exists by Lemma \ref{2.6}.

For $\alpha=1$, decompose $\{E_{01}(d)\cup E_{12}(d)\cup E_{20}(d)\mid d=1,2,\ldots,6\}$ into 6 $C_7$-factors by Lemma \ref{2.3}-\ref{2.5}, the remaining edges $E_{01}(0)\cup E_{12}(0)\cup E_{20}(0)$ form a $C_3$-factor by Lemma \ref{2.1}.

For $\alpha=3$, decompose $\{E_{01}(d)\cup E_{12}(d)\cup E_{20}(d)\mid d=2,3,4,5\}$ into 4 $C_7$-factors by Lemma \ref{2.4} and Lemma \ref{2.5}. The 3 $C_3$-factors are $E_{i(i+1)}(0)\cup E_{(i+1)(i+2)}(1)\cup E_{(i+2)i}(6), i\in Z_3$ by Lemma \ref{2.1}.

For $\alpha=5$, decompose $\{E_{01}(d)\cup E_{12}(d)\cup E_{20}(d)\mid d=3,4\}$ into 2 $C_7$-factors by Lemma \ref{2.5}. By Lemma \ref{2.1} the 5 $C_3$-factors are
\begin{align*}
& E_{01}(0)\cup E_{12}(1)\cup E_{23}(6),\ E_{01}(2)\cup E_{12}(0)\cup E_{23}(5), & \\
& E_{01}(5)\cup E_{12}(2)\cup E_{23}(0),\ E_{01}(6)\cup E_{12}(6)\cup E_{23}(2), & \\
& E_{01}(1)\cup E_{12}(5)\cup E_{23}(1).
\end{align*}

For $\alpha=7$, by Lemma \ref{2.1} the 7 $C_3$-factors are
\begin{align*}
& E_{i(i+1)}(1)\cup E_{(i+1)(i+2)}(2)\cup E_{(i+2)i}(4),\ i\in Z_3;\\
& E_{j(j+1)}(3)\cup E_{(j+1)(j+2)}(5)\cup E_{(j+2)j}(6),\ j\in Z_3;\\
& E_{01}(0)\cup E_{12}(0)\cup E_{23}(0).
\end{align*}
\end{proof}
\section{Factorizations of $K_{21}$}
In this section, $V_i, K_{V_i}, K_{V_i,V_j}, K_{V_0,V_1,V_2},$ and $E_{ij}(d)$ have the same meanings as given in Section 2. Let $V(K_{21})=V_0\cup V_1\cup V_2$, then the edge set $$E(K_{21})=\left(\bigcup_{i\in Z_3}^{}E(K_{V_i})\right)\cup E(K_{V_0,V_1,V_2}).$$
Now we decompose $K_{21}$ into $\gamma$ $C_3$-factors and $\delta$ $C_7$-factors with $\gamma+\delta=10$.
\begin{lemma}\label{3.1}
$(K_{21};C_3^\gamma,C_7^\delta)$ exists for $\gamma\in\{0,1,3,5,7,10\}$ with $\gamma+\delta=10$.
\end{lemma}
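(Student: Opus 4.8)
The plan is to split $K_{21}$ according to the decomposition $E(K_{21})=\left(\bigcup_{i\in Z_3}E(K_{V_i})\right)\cup E(K_{V_0,V_1,V_2})$ already recorded above, treating the ``diagonal'' part $D=\bigcup_{i\in Z_3}E(K_{V_i})$ (three disjoint copies of $K_7$) and the tripartite part $T=E(K_{V_0,V_1,V_2})$ separately, and then gluing the two partial factorizations together with Lemma \ref{1.3}. The guiding observation is that $D$ can supply only $C_7$-factors and never a $C_3$-factor: a factor built from diagonal edges alone must partition each $V_i$ into cycles lying inside $K_{V_i}$, and since $|V_i|=7$ is not divisible by $3$, no such factor can be a triangle-factor. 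Hence the number of $C_3$-factors in this splitting is dictated entirely by what $K_{7,7,7}$ contributes, namely the list $\{0,1,3,5,7\}$ furnished by Lemma \ref{2.7}.

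First I would dispose of the diagonal part. Since $E(K_{V_i})=E_{ii}(1)\cup E_{ii}(2)\cup E_{ii}(3)$ and $(d,7)=1$ for each $d\in\{1,2,3\}$, Lemma \ref{2.2} shows that each $E_{ii}(d)$ is a Hamilton cycle of $K_{V_i}$. For a fixed $d$ the three $7$-cycles $E_{00}(d),E_{11}(d),E_{22}(d)$ are vertex-disjoint and jointly cover all $21$ vertices, so they form a $C_7$-factor of $K_{21}$; letting $d$ run through $\{1,2,3\}$ yields $(D;C_3^0,C_7^3)$.

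Next, for each $\gamma\in\{0,1,3,5,7\}$ Lemma \ref{2.7} gives $(T;C_3^\gamma,C_7^{7-\gamma})$, a factorization on the same vertex set $V_0\cup V_1\cup V_2$ as $D$. As $D$ and $T$ are edge-disjoint with $D\cup T=K_{21}$, Lemma \ref{1.3} combines the two into $(K_{21};C_3^\gamma,C_7^{(7-\gamma)+3})=(K_{21};C_3^\gamma,C_7^{10-\gamma})$, which settles $\gamma\in\{0,1,3,5,7\}$. The last value $\gamma=10$ lies outside this splitting (it would need triangle-factors from the diagonal), but it is simply the pure triangle-factorization of $K_{21}$ and follows at once from Theorem \ref{1.0}: since $21\equiv 0\pmod 3$ and $(21,3)\notin\{(6,3),(12,3)\}$, $(K_{21};C_3^{10})$ exists.

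I do not expect a genuine obstacle, as the argument is essentially a bookkeeping assembly of earlier results; the only points needing care are verifying that the three diagonal Hamilton cycles really combine into a single spanning $C_7$-factor and that the hypotheses of Lemma \ref{1.3} (common vertex set and edge-disjointness) hold, both of which are immediate. This split also makes transparent why the present construction yields only $\gamma\in\{0,1,3,5,7,10\}$: the values $\gamma=2,4,6$ would require $2,4,6$ triangle-factors out of $K_{7,7,7}$, which Lemma \ref{2.7} does not produce, while $\gamma=8,9$ would require triangle-factors from the diagonal, which is impossible.
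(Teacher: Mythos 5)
Your proposal is correct and follows essentially the same route as the paper: decompose the three diagonal copies of $K_7$ into $C_7$-factors via Lemma \ref{2.2} (the paper assembles them with Lemma \ref{1.2}, you group them by fixed difference $d$, which yields the same three factors), combine with the $(K_{7,7,7};C_3^\alpha,C_7^\beta)$ factorizations of Lemma \ref{2.7} using Lemma \ref{1.3}, and invoke Theorem \ref{1.0} for $\gamma=10$. Your closing remark on why $\gamma\in\{2,4,6,8,9\}$ is inaccessible by this splitting is a nice observation beyond the paper's text, but the proof itself is the same.
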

\begin{proof}
Since $E(K_{V_i})=\bigcup\limits_{d=1}^{3}{E_{ii}(d)}$ for $i\in Z_3$, by Lemma \ref{2.2}, $(K_{V_i};C_7^3)$ exists. Then by Lemma \ref{1.2}, $(\bigcup_{i\in Z_3}^{}K_{V_i};C_7^3)$ exists. Hence, it is easy to observe that if $(K_{V_0,V_1,V_2};C_3^\alpha,C_7^\beta)$ exists, then $(K_{V_1\cup V_2\cup V_3};C_3^\alpha,C_7^{\beta+3})$ exists by Lemma \ref{1.3}. Thus by Lemma \ref{2.7}, $(K_{21};C_3^\gamma,C_7^\delta)$ exists for $\gamma\in\{0,1,3,5,7\}$ with $\gamma+\delta=10$.

For $(\gamma,\delta)=(10,0)$, $(K_{21};C_3^\gamma,C_7^\delta)$ exists by Theorem \ref{1.0}.
\end{proof}

\begin{lemma}\label{3.2}
$(K_{21};C_3^\gamma,C_7^\delta)$ exists for $(\gamma,\delta)=(8,2)$.
\end{lemma}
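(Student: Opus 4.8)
The plan is to peel the two $C_7$-factors off the inside of the parts and reduce to a triangle-factorization of the tripartite graph enriched by a single inside difference class. By Lemma \ref{2.2}, for each $d\in\{1,2\}$ the set $\bigcup_{i\in Z_3}E_{ii}(d)$ is a $C_7$-factor of $\bigcup_{i\in Z_3}K_{V_i}$ (three disjoint $7$-cycles, one per part), so I reserve these as the two required heptagon-factors. Since $E(K_{V_i})=E_{ii}(1)\cup E_{ii}(2)\cup E_{ii}(3)$, what is left of $K_{21}$ is exactly $G'=K_{V_0,V_1,V_2}\cup\bigcup_{i\in Z_3}E_{ii}(3)$, with $|E(G')|=147+21=168=8\cdot 21$. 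Thus it suffices to decompose $G'$ into $8$ $C_3$-factors; combined with the two reserved $C_7$-factors via Lemma \ref{1.3}, this yields $(K_{21};C_3^{8},C_7^{2})$.

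Unlike the values handled in Lemma \ref{3.1}, the count $\gamma=8$ cannot be reached by the group-action construction alone: the tripartite part supplies at most $7$ all-crossing triangle-factors (Lemma \ref{2.7}), while $\bigcup_i E_{ii}(3)$ carries no triangle whatsoever, each $E_{ii}(3)$ being a $7$-cycle. Hence at least one factor must be \emph{mixed}, containing triangles of the form $\{l_i,(l+3)_i,x_{i'}\}$ that each swallow one reserved inside edge $\{l_i,(l+3)_i\}$ together with two crossing edges. My plan is therefore to exhibit the $8$ factors explicitly: each is a parallel class consisting of some all-crossing triangles (as in Lemma \ref{2.1}) and some mixed triangles, chosen so that (i) each of the $7$ inside edges of every $E_{ii}(3)$ lies in exactly one mixed triangle, the inside edges used within a single factor forming a matching of that $7$-cycle, and (ii) every crossing edge of $K_{V_0,V_1,V_2}$ is covered exactly once.

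The main obstacle is precisely this simultaneous resolvability: producing $8$ pairwise edge-disjoint parallel classes that are each a perfect triangle partition of the $21$ vertices and that collectively form an exact cover of $E(G')$. The vertex bookkeeping has enough slack to permit this, since the single vertex of a mixed triangle doubled in $V_i$ may be placed in either of the other two parts; what must be avoided is over-symmetrizing. Indeed, the tempting fully symmetric attempt---developing one doubled base triangle in each part under the $Z_7$-shift---fails, because the resulting per-class occupancy counts would have to satisfy $2a+c=a+2b=b+2c=7$, which has no integer solution; the construction must break the $Z_7$-symmetry. I expect to settle the lemma by a short direct construction (a finite exact-cover computation), fixing the mixed triangles part by part so that the three inside $7$-cycles are covered, and completing each class with all-crossing triangles from Lemma \ref{2.1}.

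Should an explicit list prove awkward, a fallback is a factor-swap: start from a Kirkman triple system $(K_{21};C_3^{10})$, which exists by Theorem \ref{1.0}, and re-decompose the union of two suitably chosen triangle-factors into two $C_7$-factors. This keeps the remaining $8$ triangle-factors intact, but the delicate point becomes verifying that the $4$-regular union of the two chosen classes does admit a decomposition into two spanning unions of $7$-cycles, so the direct construction of the mixed parallel classes remains the route I would pursue first.
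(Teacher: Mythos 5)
There is a genuine gap: your proof stops exactly where the work begins. The reduction is sound --- reserving $\bigcup_{i\in Z_3}E_{ii}(1)$ and $\bigcup_{i\in Z_3}E_{ii}(2)$ as the two $C_7$-factors, counting $|E(G')|=168=8\cdot 21$, and recognizing that at least one of the eight triangle-factors must be mixed --- and this is essentially the strategy the paper follows. But you never exhibit the eight parallel classes; ``I expect to settle the lemma by a short direct construction'' is a plan, not a proof, and for this lemma the explicit construction \emph{is} the entire content. The paper's proof is precisely such a list: a base $C_3$-factor $F_0=\{(0_0,1_0,2_1),(1_1,4_1,5_2),(1_2,6_2,3_0),(2_0,6_1,4_2),(4_0,0_1,3_2),(5_0,3_1,2_2),(6_0,5_1,0_2)\}$ containing three mixed triangles (one doubled vertex per part, with inside differences $1,3,2$ in $V_0,V_1,V_2$ respectively) and four all-crossing triangles, developed mod$(7,-)$ into seven classes, plus the $Z_7$-invariant crossing factor $E_{01}(0)\cup E_{12}(0)\cup E_{20}(0)$, with $C_7$-factors $E_{00}(2)\cup E_{11}(1)\cup E_{22}(1)$ and $E_{00}(3)\cup E_{11}(2)\cup E_{22}(3)$. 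Note the paper does \emph{not} reserve the same inside difference in every part, whereas your setup leaves $E_{ii}(3)$ in all three parts; your variant would additionally require a base factor whose three inside edges all have difference $3$, which you have not shown to exist, so even your reduction is not verified to be completable as stated.

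Two of your side assertions are also unreliable and may have steered you away from the construction. The counting obstruction $2a+c=a+2b=b+2c=7$ only rules out a class consisting \emph{entirely} of mixed triangles; a class with one doubled triangle per part plus four all-crossing triangles has per-part occupancy $2+1+4=7$ and is perfectly feasible --- indeed the paper's solution is exactly of this shape and is $Z_7$-developed, contradicting your conclusion that ``the construction must break the $Z_7$-symmetry.'' The fallback is likewise unsubstantiated: the union of two edge-disjoint triangle-factors is $4$-regular and hence splits into two 2-factors by Petersen's theorem, but nothing forces either 2-factor to be a $C_7$-factor, so that route would require its own explicit construction --- the very thing your proposal is missing.
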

\begin{proof}
Let \begin{align*}
F_0=&\{(0_0,1_0,2_1),(1_1,4_1,5_2),(1_2,6_2,3_0),(2_0,6_1,4_2),\\
& (4_0,0_1,3_2),(5_0,3_1,2_2),(6_0,5_1,0_2)\},
\end{align*}
then $F_0$ is a $C_3$-factor of $K_{21}$. Six additional $C_3$-factors, denoted by $F_1,F_2,\ldots,$ $F_6$, are formed by developing $F_0$ mod$(7,-)$.
Let $F_7=E_{01}(0)\cup E_{12}(0)\cup E_{20}(0)$, then by Lemma \ref{2.1} $F_7$ is a $C_3$-factor.
Let $F_8=E_{00}(2)\cup E_{11}(1)\cup E_{22}(1)$, $F_9=E_{00}(3)\cup E_{11}(2)\cup E_{22}(3)$, then $F_8$ and $F_9$ are both $C_7$-factors of $K_{21}$ by Lemma \ref{1.2} and Lemma \ref{2.2}. Finally, one can check that each edge of $K_{21}$ is used exactly once.
\end{proof}

\begin{lemma}\label{3.3}
$(K_{21};C_3^\gamma,C_7^\delta)$ exists for $(\gamma,\delta)=(9,1)$.
\end{lemma}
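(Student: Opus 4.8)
The plan is to follow the pattern of Lemma~\ref{3.2}: split $E(K_{21})$ along the difference classes $E_{ij}(d)$, obtain seven of the triangle-factors by developing a single base $C_3$-factor $F_0$ modulo $(7,-)$, and handle the rest by a few explicit factors. Since we now want nine triangle-factors and exactly one heptagon-factor, I first place the heptagon-factor inside the parts: by Lemma~\ref{2.2}, $E_{00}(a)\cup E_{11}(b)\cup E_{22}(c)$ is a $C_7$-factor of $K_{21}$ for any $a,b,c\in\{1,2,3\}$, and it consumes exactly one within-part difference class in each $V_i$. A base factor $F_0$ has $21$ edges, which must lie in $21$ distinct difference classes for the development to be edge-disjoint; since $K_{21}$ has $30$ classes in all ($9$ within-part and $21$ between-part), the developed family covers $21$ classes, and the remaining two triangle-factors together with the heptagon-factor must cover the other nine.

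Counting then pins down the shape of $F_0$. The heptagon-factor uses three within-part classes, so the two leftover triangle-factors, having only $14$ triangles between them, must cover the remaining six classes. If any of these six were a within-part class, that class contributes a full $7$-cycle inside one part; a triangle can meet it in at most one edge (a triangle with two such edges would be an all-within triangle, impossible since a single difference class is triangle-free), and a short parity check on the vertices of each part shows two factors cannot absorb a whole within-part class this way. Hence the two leftover triangle-factors must be transversal, each of the form $E_{01}(x)\cup E_{12}(y)\cup E_{20}(z)$ with $x+y+z\equiv 0\pmod 7$ as in Lemma~\ref{2.1}, and they use six between-part classes. Therefore the development of $F_0$ must absorb all six remaining within-part classes, so $F_0$ contains exactly six within-part edges, two in each $V_i$ realizing the two difference values not taken by the heptagon-factor, together with fifteen between-part edges lying in fifteen distinct between-part classes. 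A triangle pattern realizing this is one transversal triangle together with, for each $i\in Z_3$, two triangles having two vertices in $V_i$ and one in $V_{i+1}$; one checks directly that this uses the seven vertices of each part exactly once.

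The proof is then completed by exhibiting such an $F_0$ explicitly, developing it mod $(7,-)$ to get seven edge-disjoint $C_3$-factors, and choosing the two transversal triangle-factors so that the six between-part classes they use are exactly the six left uncovered by $F_0$. The main obstacle is this coordinated bookkeeping: one must write down an $F_0$ whose six within-part edges realize the prescribed differences and whose fifteen between-part edges occupy five classes in each of the three pairs, while simultaneously arranging that the six leftover between-part differences split into two triples each summing to $0\pmod 7$, so that Lemma~\ref{2.1} produces the last two factors. Once a concrete $F_0$ meeting these constraints is found, checking that the ten factors partition $E(K_{21})$ is the same routine finite verification used at the end of Lemma~\ref{3.2}.
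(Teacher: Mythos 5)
Your reduction is sound and in fact reverse-engineers the paper's construction exactly. The published proof takes a base factor consisting of one transversal triangle and, for each $i\in Z_3$, two triangles with two vertices in $V_i$ and one in $V_{i+1}$, develops it mod $(7,-)$ into seven $C_3$-factors, takes $F_7=E_{01}(1)\cup E_{12}(2)\cup E_{20}(4)$ and $F_8=E_{01}(4)\cup E_{12}(1)\cup E_{20}(2)$ as the two transversal triangle-factors via Lemma~\ref{2.1}, and $F_9=E_{00}(3)\cup E_{11}(3)\cup E_{22}(3)$ as the heptagon-factor via Lemma~\ref{2.2} --- precisely the shape your counting predicts: within-part differences $1,2$ in each part absorbed by the development, difference $3$ reserved for the $C_7$-factor, five between-part classes per pair in $F_0$, and leftover triples $(1,2,4)$ and $(4,1,2)$ each summing to $0\pmod 7$. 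Your side arguments also check out: the $21$ edges of $F_0$ must lie in distinct classes for the development to be edge-disjoint, and since a triangle-factor can contain at most three edges of a fixed within-part class (each such triangle uses exactly two vertices of $V_i$, so $2k\le 7$ forces $k\le 3$), two factors cannot absorb the seven edges of such a class. (One small over-claim: a purely between-part triangle-factor consists of transversal triangles but need not be a union of three classes $E_{01}(x)\cup E_{12}(y)\cup E_{20}(z)$ with a single difference triple; this is harmless here since for the construction you only need sufficiency.)

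Nevertheless, as a proof the proposal has a genuine gap: it never exhibits $F_0$. The lemma is an existence statement whose entire mathematical content is the explicit object, and you yourself defer the essential step (``one must write down an $F_0$\dots'', ``Once a concrete $F_0$ meeting these constraints is found\dots''). Nothing in your counting shows that the over-constrained system --- six within-part edges realizing the prescribed differences, fifteen between-part edges in fifteen distinct classes with five per pair, vertex-disjointness of the seven triangles covering each part exactly once, and leftover between-part differences splitting into two zero-sum transversal triples --- is simultaneously satisfiable; that is exactly what the explicit example certifies and what a blueprint cannot replace. To close the gap it suffices to exhibit, as the paper does, $F_0=\{(0_0,1_0,6_1),(0_1,1_1,4_2),(0_2,2_2,3_0),(2_0,4_0,4_1),(3_1,5_1,3_2),(5_2,6_2,5_0),(6_0,2_1,1_2)\}$ and carry out the finite verification that the ten resulting factors partition $E(K_{21})$, as at the end of Lemma~\ref{3.2}.
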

\begin{proof}
Let \begin{align*}
F_0=&\{(0_0,1_0,6_1),(0_1,1_1,4_2),(0_2,2_2,3_0),(2_0,4_0,4_1),\\
& (3_1,5_1,3_2),(5_2,6_2,5_0),(6_0,2_1,1_2)\},
\end{align*}
then $F_0$ is a $C_3$-factor of $K_{21}$. Six additional $C_3$-factors, denoted by $F_1,F_2,\ldots,$ $F_6$, are formed by developing $F_0$ mod$(7,-)$.
Let $F_7=E_{01}(1)\cup E_{12}(2)\cup E_{20}(4)$, $F_8=E_{01}(4)\cup E_{12}(1)\cup E_{20}(2)$, then by Lemma \ref{2.1} $F_7$ and $F_8$ are both $C_3$-factors.
Let $F_9=E_{00}(3)\cup E_{11}(3)\cup E_{22}(3)$, then $F_9$ is a $C_7$-factor of $K_{21}$ by Lemma \ref{1.2} and Lemma \ref{2.2}. Again, one can check that each edge of $K_{21}$ is used exactly once.
\end{proof}
Combining Lemmas \ref{3.1}-\ref{3.3}, we have the following result.
\begin{lemma}\label{3.4}
$(K_{21};C_3^\gamma,C_7^\delta)$ exists for $\gamma\in\{0,1,3,5,7,8,9,10\}$ with $\gamma+\delta=10$.
\end{lemma}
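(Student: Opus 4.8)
The plan is to observe that this is purely a bookkeeping statement: it asserts the existence of $(K_{21};C_3^\gamma,C_7^\delta)$ for every $\gamma$ in the union of the parameter sets already established, and nothing new needs to be constructed. Since $\gamma+\delta=10$ is fixed throughout, it suffices to track the admissible values of $\gamma$ alone.

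First I would record the contribution of each of the three preceding lemmas. Lemma \ref{3.1} yields existence for $\gamma\in\{0,1,3,5,7,10\}$. Lemma \ref{3.2} supplies the single additional value $(\gamma,\delta)=(8,2)$, i.e.\ $\gamma=8$, and Lemma \ref{3.3} supplies $(\gamma,\delta)=(9,1)$, i.e.\ $\gamma=9$. Taking the union,
\[
\{0,1,3,5,7,10\}\cup\{8\}\cup\{9\}=\{0,1,3,5,7,8,9,10\},
\]
which is exactly the set claimed in the statement. This immediately gives the result, so the body of the proof is little more than this observation.

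Since there is no genuine construction here, there is no real obstacle; the only thing to verify is that the three source lemmas together cover precisely the advertised set and that the shared constraint $\gamma+\delta=10$ is compatible across all of them, which it is by inspection. I would therefore phrase the proof as a one-line appeal to Lemmas \ref{3.1}--\ref{3.3}.
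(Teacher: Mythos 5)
Your proposal is correct and matches the paper exactly: the paper states Lemma~\ref{3.4} as an immediate consequence of combining Lemmas~\ref{3.1}--\ref{3.3}, with no further argument, which is precisely your union-of-parameter-sets observation.
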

\section{Main Results}
Let $n$ be an odd integer. Let $r$ and $s$ be positive integers with $r+s=\frac{n-1}{2}$. It is easy to see that a necessary condition for the existence of an $HW(n;r,s;3,7)$ is $n\equiv {21} \pmod{42}$. Let $n=42t+21$, $t\geq 0$. Let the vertex set of $K_n$ be $V(K_n)=\{j_i\mid j\in Z_7, i\in Z_{6t+3}\}$, denote $V_i=Z_7\times\{i\}$ for $i\in Z_{6t+3}$. The next lemma is based on a construction given in the paper \cite{Adams}.
\begin{lemma}\label{4.1}
For $n=42t+21$ and $t\geq 0$, $(K_n;K_{7,7,7}^{3t},K_{21})$ exists.
\end{lemma}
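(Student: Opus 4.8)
The plan is to reduce the whole construction to a single classical design on the $6t+3$ \emph{parts} $V_0,\dots,V_{6t+2}$, each of size $7$, and then to ``blow up'' that design by replacing points with copies of $Z_7$. The starting observation is that $K_n$ decomposes naturally into the internal graphs $K_{V_i}$ together with the bipartite graphs $K_{V_i,V_j}$ for $i\neq j$. A copy of $K_{21}$ placed on a union of three parts $V_a\cup V_b\cup V_c$ absorbs the three internal graphs and the three bipartite graphs among those parts, whereas a copy of $K_{7,7,7}$ placed on the same triple (with $V_a,V_b,V_c$ as partite sets) absorbs only the three bipartite graphs. Thus every edge of $K_n$ is governed either by an unordered pair of distinct parts or, for internal edges, by a single part, and the problem becomes one of assigning triples of parts to factors so that each such pair is covered exactly once.

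The key point is that the correct object on the parts is a Kirkman triple system of order $6t+3$. Since $6t+3\equiv 3\pmod 6$, a $\mathrm{KTS}(6t+3)$ exists for every $t\ge 0$, and it resolves the pairs of the $6t+3$ parts into exactly $(6t+3-1)/2=3t+1$ parallel classes, each partitioning the parts into $2t+1$ triples. First I would single out one parallel class $P_0$ and install, on each of its triples $\{V_a,V_b,V_c\}$, a copy of $K_{21}$ on $V_a\cup V_b\cup V_c$; because $P_0$ partitions the parts, these $2t+1$ copies are vertex-disjoint and span $K_n$, giving a single $K_{21}$-factor. Then, for each of the remaining $3t$ parallel classes, I would install on every triple a copy of $K_{7,7,7}$ with the three parts as its partite sets, obtaining a spanning $K_{7,7,7}$-factor from each class. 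This yields precisely $3t$ $K_{7,7,7}$-factors and one $K_{21}$-factor.

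It then remains to verify that every edge of $K_n$ is used exactly once. The internal edges of each $V_i$ lie in the unique $K_{21}$ whose $P_0$-triple contains $V_i$, and appear nowhere else. A bipartite edge between $V_i$ and $V_j$ is governed by the pair $\{V_i,V_j\}$, which, by the defining Steiner property of a Kirkman triple system, occurs in exactly one triple across \emph{all} the parallel classes: if that triple lies in $P_0$ the edge is covered by the corresponding $K_{21}$, and otherwise by the corresponding $K_{7,7,7}$. Hence each edge is covered precisely once; the degrees are consistent since $14\cdot 3t+20=n-1$.

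I do not expect a genuine obstacle, since the only existence question is outsourced to the classical theory of Kirkman triple systems; the one thing to get right is the bookkeeping of the reduction, namely that a single resolution of $K_{6t+3}$ into $3t+1$ parallel classes simultaneously supplies the lone $K_{21}$-factor and all $3t$ $K_{7,7,7}$-factors, and that splitting ``one internal-plus-bipartite class against bipartite-only classes'' accounts for every edge exactly once. The only cases needing a separate glance are the degenerate small ones, in particular $t=0$, where $\mathrm{KTS}(3)$ has a single class and gives $K_{21}$ itself with no $K_{7,7,7}$-factor.
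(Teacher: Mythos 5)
Your proposal is correct and is essentially the paper's own argument: both take a Kirkman triple system $\mathrm{KTS}(6t+3)$ on the $6t+3$ parts of size $7$, blow each triple up to a $K_{7,7,7}$, and use one distinguished parallel class to absorb the internal edges of the parts into a $K_{21}$-factor. The only cosmetic difference is that the paper first forms all $3t+1$ $K_{7,7,7}$-factors plus a leftover $K_7$-factor and then merges one $K_{7,7,7}$-factor with it, whereas you install the $K_{21}$'s on the distinguished class directly---the same construction stated in one step.
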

\begin{proof}
By Theorem \ref{1.0}, $(K_{6t+3};C_3^{3t+1})$ exists for $t\geq 0$, it is actually the well known Kirkman triple system of order $6t+3$. Let the vertex set of $K_{6t+3}$ be $\{V_i\mid i\in Z_{6t+3}\}$, replace each 3-cycle $(V_i,V_j,V_k)$ with the complete tripartite graph $K_{7,7,7}$ on vertex sets $V_i,V_j$ and $V_k$, then each $C_3$-factor of $K_{6t+3}$ corresponds to a $K_{7,7,7}$-factor of $K_n$, also these $K_{7,7,7}$-factors form the complete multipartite graph $K_{(6t+3)(7)}$ on vertex sets $V_0,V_1,\ldots,V_{6t+2}$, i.e. $(K_{(6t+3)(7)};K_{7,7,7}^{3t+1})$ exists. Hence $(K_{n};K_{7,7,7}^{3t+1},K_7)$ exists and the union of any $K_{7,7,7}$-factor and the $K_7$-factor of $K_n$ is actually a $K_{21}$-factor. Therefore, $(K_{n};K_{7,7,7}^{3t},$ $K_{21})$ exists.
\end{proof}

\begin{lemma}\label{4.2}
Let $\alpha_i\in\{0,1,3,5,7\}$ with $\alpha_i+\beta_i=7$ for $i=1,2,\ldots,3t$, and $\gamma\in\{0,1,3,5,7,8,9,10\}$ with $\gamma+\delta=10$, then there exists an $HW(n;\sum_{i=1}^{3t}\alpha_i+\gamma,\sum_{i=1}^{3t}\beta_i+\delta;3,7)$.
\end{lemma}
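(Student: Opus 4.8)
The plan is to assemble the desired factorization from the coarse decomposition of $K_n$ provided by Lemma \ref{4.1}, together with the local decompositions of $K_{7,7,7}$ and $K_{21}$ supplied by Lemmas \ref{2.7} and \ref{3.4}. By Lemma \ref{4.1}, $K_n$ decomposes as $F_1\cup F_2\cup\cdots\cup F_{3t}\cup G$, where each $F_i$ is a $K_{7,7,7}$-factor and $G$ is a $K_{21}$-factor; these $3t+1$ subgraphs are pairwise edge-disjoint and all span $V(K_n)$. Since $n=42t+21$, each $F_i$ consists of $2t+1$ vertex-disjoint copies of $K_{7,7,7}$, and $G$ consists of $2t+1$ vertex-disjoint copies of $K_{21}$.

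First I would treat a single $K_{7,7,7}$-factor $F_i$. For the chosen pair $(\alpha_i,\beta_i)$ with $\alpha_i\in\{0,1,3,5,7\}$, Lemma \ref{2.7} decomposes each copy of $K_{7,7,7}$ in $F_i$ into $\alpha_i$ $C_3$-factors and $\beta_i$ $C_7$-factors. Because the copies are vertex-disjoint and all receive the same parameters, repeated application of Lemma \ref{1.2} shows that $(F_i;C_3^{\alpha_i},C_7^{\beta_i})$ exists. The same reasoning applies to $G$: by Lemma \ref{3.4} each copy of $K_{21}$ decomposes into $\gamma$ $C_3$-factors and $\delta$ $C_7$-factors for the chosen $(\gamma,\delta)$ with $\gamma\in\{0,1,3,5,7,8,9,10\}$, and Lemma \ref{1.2} yields $(G;C_3^{\gamma},C_7^{\delta})$.

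Finally I would glue the factors together across the decomposition. The subgraphs $F_1,\ldots,F_{3t},G$ are edge-disjoint and share the vertex set $V(K_n)$, so iterated use of Lemma \ref{1.3} (which adds the factor multiplicities) produces a factorization of $K_n=F_1\cup\cdots\cup F_{3t}\cup G$ into $\sum_{i=1}^{3t}\alpha_i+\gamma$ $C_3$-factors and $\sum_{i=1}^{3t}\beta_i+\delta$ $C_7$-factors. A quick count confirms that the total number of 2-factors equals $\sum_{i=1}^{3t}(\alpha_i+\beta_i)+(\gamma+\delta)=7\cdot 3t+10=21t+10=\frac{n-1}{2}$, exactly as required for an $HW(n;\cdot,\cdot;3,7)$.

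The argument has no genuinely hard step; it is an assembly of the preceding lemmas. The only point requiring care is keeping straight which gluing lemma applies where: Lemma \ref{1.2} is used \emph{within} a single factor, where the pieces are vertex-disjoint and must carry identical parameters, whereas Lemma \ref{1.3} is used \emph{across} distinct factors, where the pieces are edge-disjoint on a common vertex set and their multiplicities accumulate. The admissible ranges $\alpha_i\in\{0,1,3,5,7\}$ and $\gamma\in\{0,1,3,5,7,8,9,10\}$ are precisely the values furnished by Lemmas \ref{2.7} and \ref{3.4}, so no further cases arise.
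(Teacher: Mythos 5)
Your proposal is correct and follows essentially the same route as the paper: decompose $K_n$ via Lemma \ref{4.1}, apply Lemmas \ref{2.7} and \ref{3.4} within the vertex-disjoint copies using Lemma \ref{1.2}, then accumulate multiplicities across the edge-disjoint factors using Lemma \ref{1.3}. In fact your bookkeeping is slightly cleaner than the paper's, which at the final gluing step cites Lemma \ref{1.1} where Lemma \ref{1.3} is clearly intended.
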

\begin{proof}
By Lemma \ref{4.1}, we decompose $K_n$ into $3t$ $K_{7,7,7}$-factors and a $K_{21}$-factor.

For the $i$th $K_{7,7,7}$-factor, let $\alpha_i\in\{0,1,3,5,7\}$ and $\alpha_i+\beta_i=7$. Then decompose each $K_{7,7,7}$ of this $K_{7,7,7}$-factor into $\alpha_i$ $C_3$-factors and $\beta_i$ $C_7$-factors by Lemma \ref{2.7}, by Lemma \ref{1.2} these 2-factors of $K_{7,7,7}$ form $\alpha_i$ $C_3$-factors and $\beta_i$ $C_7$-factors of $K_n$.

Similarly, the $K_{21}$-factor of $K_n$ can be decomposed into $\gamma$ $C_3$-factors and $\delta$ $C_7$-factors for $\gamma\in\{0,1,3,5,7,8,9,10\}$ with $\gamma+\delta=10$ by Lemma \ref{1.2} and \ref{3.4}.

Then by Lemma \ref{1.1}, $(K_n;C_3^{\sum_{i=1}^{3t}\alpha_i+\gamma},C_7^{\sum_{i=1}^{3t}\beta_i+\delta})$ exists, i.e. there exists an $HW(n;$ $\sum_{i=1}^{3t}\alpha_i+\gamma,\sum_{i=1}^{3t}\beta_i+\delta;3,7)$.
\end{proof}
We are now ready to prove the main theorem of this paper.
\begin{proof}[{Proof of Theorem~\ref{1.1}}]
As noted earlier, the condition $n\equiv 21\pmod {42}$ is necessary, we now prove sufficiency. Let $n=42t+21$, the case $t=0$ (i.e. $n=21$) is solved by Lemma \ref{3.4}.

For the case $t>0$, let $r=7a+b$, where $0\leq b<7$. For the existence of an $HW(n;r,s;3,7)$, we only need to assign a proper value to each of $\{\gamma,\alpha_i\mid i=1,2,\ldots,3t\}$ in Lemma \ref{4.2}. Note that if $a=3t+1$, then $b<3$ (the case $b=3$ is the case $s=0$, which is covered by Theorem~\ref{1.0}).

If $b=0$ and $a<3t+1$, then let $\gamma=0$ and $\alpha_i=\begin{cases}7, & \text{for}\ 1\leq i\leq a,\\0, & \text{for}\ a< i\leq 3t. \end{cases}$

If $b=0$ and $a=3t+1$, then let $\gamma=7$ and $\alpha_i=7$ for $i=1,2,\ldots,3t$.

If $b=1$ and $a<3t+1$, then let $\gamma=1$ and $\alpha_i=\begin{cases}7, & \text{for}\ 1\leq i\leq a,\\0, & \text{for}\ a< i\leq 3t. \end{cases}$

If $b=1$ and $a=3t+1$, then let $\gamma=8$ and $\alpha_i=7$ for $i=1,2,\ldots,3t$.

If $b=2$ and $a<3t$, then let $\gamma=1$ and $\alpha_i=\begin{cases}1, & \text{for}\ i=1,\\7, & \text{for}\ 2\leq i\leq a+1,\\0, & \text{for}\ a+1< i\leq 3t. \end{cases}$

If $b=2$ and $a=3t$, then let $\gamma=9$ and $\alpha_i=\begin{cases}1, & \text{for}\ i=1,\\7, & \text{for}\ 2\leq i\leq 3t. \end{cases}$

If $b=2$ and $a=3t+1$, then let $\gamma=8$ and $\alpha_i=7$ for $i=1,2,\ldots,3t$.

If $b=3$, then let $\gamma=3$ and $\alpha_i=\begin{cases}7, & \text{for}\ \ 1\leq i\leq a,\\0, & \text{for}\ a< i\leq 3t. \end{cases}$

If $b=4$ and $a<3t$, then let $\gamma=3$ and $\alpha_i=\begin{cases}1, & \text{for}\ i=1,\\7, & \text{for}\ 2\leq i\leq a+1,\\0, & \text{for}\ a+1< i\leq 3t. \end{cases}$

If $b=4$ and $a=3t$, then let $\gamma=8$ and $\alpha_i=\begin{cases}3, & \text{for}\ i=1,\\7, & \text{for}\ 1< i\leq 3t. \end{cases}$

If $b=5$, then let $\gamma=5$ and $\alpha_i=\begin{cases}7, & \text{for}\ 1\leq i\leq a,\\0, & \text{for}\ a< i\leq 3t. \end{cases}$

If $b=6$ and $a<3t$, then let $\gamma=1$ and  $\alpha_i=\begin{cases}5, & \text{for}\ i=1,\\7, & \text{for}\ 2\leq i\leq a+1,\\0, & \text{for}\ a+1< i\leq 3t. \end{cases}$

If $b=6$ and $a=3t$, then let $\gamma=8$ and $\alpha_i=\begin{cases}5, & \text{for}\ i=1,\\7, & \text{for}\ 1< i\leq 3t. \end{cases}$
\end{proof}
\section*{Acknowledgments}
The authors would like to express their deep gratefulness to the reviewers for their detail comments and valuable suggestions. The work of Hung-Lin Fu was partially supported by NSC 100-2115-M-009-005-MY3.

\end{document}